\theoremstyle{plain}
\newtheorem{thm}{Theorem}
\theoremstyle{definition}
\newtheorem{defn}[thm]{Definition}
\newtheorem{rem}[thm]{Remark}
\newcommand{\CC}{\mathbb{C}}
\newcommand{\PP}{\mathbb{P}}
\newcommand{\QQ}{\mathbb{Q}}
\newcommand{\ZZ}{\mathbb{Z}}
\begin{document}
\title[Covers of Elliptic Curves]{Covers of Elliptic Curves with Unique, Totally Ramified Branch Points}
\author{Simon Rubinstein-Salzedo}
\address{Department of Mathematics, Dartmouth College, Hanover, NH 03755}
\email{simon.rubinstein-salzedo@dartmouth.edu}

\begin{abstract}
A well-known and difficult problem in computational number theory and algebraic geometry is to write down equations for branched covers of algebraic curves with specified monodromy type. In this article, we present a technique for computing such covers in the case of covers of elliptic curves with unique, totally ramified branch points.
\end{abstract}
\maketitle                   





\section{Introduction}

In this article, we present a technique, based on a degeneration idea that Couveignes in \cite{Couv99} used in a slightly different context, to construct covers of elliptic curves with unique, totally ramified branch points. In particular, given an elliptic curve $E$ and an integer $g\ge 2$, we construct a genus-$g$ curve $C$ and a map $f:C\to E$ of degree $2g-1$, so that $f$ is ramified above exactly one point of $E$, and so that the local monodromy above that point is of type a $(2g-1)$-cycle.

The study of branched covers of curves goes back to Riemann, who determined necessary and sufficient conditions on the local monodromies for such covers to exist. However, interest in writing down explicit equations for such covers is more recent, and it was only after Bely\u\i\ in \cite{Belyi79} proved his celebrated theorem and Grothendieck laid out his Esquisse d'un Programme \cite{Groth97} for using such covers to understand the absolute Galois group of $\QQ$ that interest in this subject took off. More recently, based on work of Beckmann \cite{Beck89}, Roberts in \cite{Rob04} has demonstrated that Bely\u\i\ maps $\PP^1\to\PP^1$ can be used in practice to construct number fields with limited ramification.

While the theory in Beckmann's work and Roberts's work applies to more general covers of curves, computations of covers of other curves has been too difficult for practical use for constructing number fields of limited ramification.

On the other hand, inspired by the analogy with Bely\u\i\ maps, work has been done from a mostly topological perspective on branched covers of elliptic curves. This work began with Lochak in \cite{Lochak05} and has continued with work of M\"oller in \cite{Moller05} and of Herrlich and Schmith\"usen (for example, in \cite{HS09}) and their Karlsruhe school.

In Section \ref{origamis}, we present some background on branched covers of elliptic curves ramified at one point (also known as origamis). In Section \ref{family}, we present our result. Finally, in Section \ref{degen}, we present the degeneration technique used to construct the family of covers for the first time. While the technique presented does not guarantee a solution, we suspect that this section will be the most interesting part of this article.

\section{Origamis} \label{origamis}

By the Riemann-Hurwitz formula, an unramified cover of a genus-1 curve must again be a genus-1 curve, which means that such a map is simply a composition of an isogeny of elliptic curves and a translation.

However, if we allow one branched point on our elliptic curve, then there are covers by higher-genus curves. These will be our objects of study.

\begin{defn} Let $E$ be an elliptic curve. An origami is a pair $(C,f)$, where $C$ is a curve and $f:C\to E$ is a map, branched only above one point. \end{defn}

Origamis are so-called because they admit a pictorial interpretation vaguely reminiscent of the eponymous Japanese art form, analogous to that of dessins for Bely\u\i\ maps.

Over $\CC$, any elliptic curve $E$ can be written as $\CC/\Lambda$, for some lattice $\Lambda\subset\CC$. We will find it most helpful to think of $E$ as a fundamental parallelogram for $\Lambda$. The choice of lattice $\Lambda$ or fundamental parallelogram determines the complex structure on $E$. Many of our arguments do not depend on the choice of complex structure; when this happens, we choose to work with the square lattice $\Lambda=\ZZ[i]$, and our fundamental parallelogram of choice will be the square $S$ with vertices 0, 1, $1+i$, and $i$. (The only reason we prefer this parallelogram is that it is easier to draw than are other parallelograms. It should not generally be assumed that we are interested in the special properties of the elliptic curve $\CC/\ZZ[i]$ not enjoyed by other elliptic curves.) Our elliptic curve will then be the square, with opposite edges identified.

Now, consider a disjoint union of $n$ translates of $S$, and identify various edges to form an orientable surface $X$ subject to the following requirements: \begin{enumerate} \item $X$ is connected. \item Every left edge is identified with a unique right edge, and vice versa. \item Every top edge is identified with a unique bottom edge, and vice versa. \end{enumerate} If we remove all the vertices of the $n$ squares, the resulting figure carries the structure of a Riemann surface, obtaining a complex structure whose charts are (slightly enlarged versions of) the original $n$ squares minus the vertices. The resulting Riemann surface $\widetilde{X}$ is then a compact Riemann surface with several punctures. There is a unique way of compactifying $\widetilde{X}$ so that its compactification is a compact Riemann surface; we call this Riemann surface $X$. Furthermore, $X$ admits a degree $n$ map to the elliptic curve $\CC/\ZZ[i]$ by mapping a point in any translate of $S$ to the corresponding point in $S$. This map is branched only above the vertex of $S$. An example can be seen in Figure \ref{Lorigami}. In this diagram, we have explained the edge identification; in the future, if there are no markings on the edges, we take this to mean that opposite edges are identified. (This will be the case in all origami diagrams in this article.) The map is shown in Figure \ref{origmap}.

\begin{figure}  \begin{center} \includegraphics[height=1in]{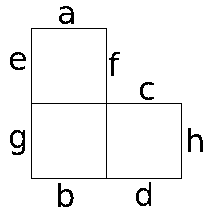} \end{center} \caption[An origami diagram]{This diagram represents a genus-2 curve with a degree-3 map to the elliptic curve $y^2=x^3-x$. Here we identify opposite edges, meaning that edge $a$ is identified with edge $b$, edge $c$ with edge $d$, edge $e$ with edge $f$, and edge $g$ with edge $h$.} \label{Lorigami} \end{figure}

\begin{figure} \begin{center} \includegraphics[height=1in]{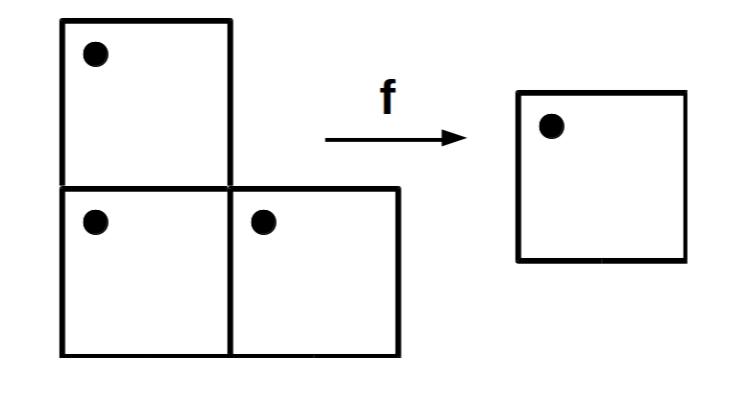} \end{center} \caption{Shown are all the preimages under $f$ in $C$ of the marked point in $E$.} \label{origmap} \end{figure}



Had we chosen to distinguish a different elliptic curve with a different fundamental parallelogram $P$, the corresponding origami would simply consist of a disjoint union of $n$ translates of $P$ with similar edge identifications.

The origami diagram, though apparently extremely simple, turns out to carry a wealth of combinatorial information in readily available form. For example, we can compute the local monodromy about the branch point. To do this, number the squares of $C$ from 1 to the degree $n$ of the map, in any way. We now define two permutations $g,h\in S_n$, which will be the monodromies around two loops generating $H_1(E)$. Let $g$ be the permutation obtained by moving one square to the right, and let $h$ be the permutation obtained by moving one square up. For example, if we label the square in Figure \ref{Lorigami} with a ``1'' in the top left corner, a ``2'' in the bottom left, and a ``3'' in the bottom right, then $g=(23)$ and $h=(12)$. The local monodromy above the branch point is the commutator $[g,h]=g^{-1}h^{-1}gh$, which in this case is the 3-cycle $(132)$. If we relabel the squares, we obtain different permutations $g'$ and $h'$ and a different commutator; however, there is some $\sigma\in S_n$ so that $g'=\sigma^{-1}g\sigma$ and $h'=\sigma^{-1}h\sigma$, so the cycle type of the local monodromy is well-defined.

It is also possible to determine the genus of $C$ from the origami diagram. To do this, we use Euler's formula $V-E+F=2-2g$. The number of faces $F$ is equal to $n$, and the number of edges is $2n$. To determine the number of vertices, we can either check directly which vertices in the diagram are glued to which other vertices, or we can note that the number of vertices is equal to the number of cycles in the cycle decomposition of the local monodromy. Hence, in this case, there is one vertex, so $V-E+F=-2$, and so the genus is 2. In this article, we will only be interested in origamis for which $V=1$.

\section{A family of algebraic origamis} \label{family}

In this section, we construct a family of examples of explicit origamis, one for each genus $g$.

\begin{defn} We say that an origami is totally ramified if the preimage of the branch point is a single point. \end{defn}

The origamis we construct here will all be totally ramified.

\begin{thm} \label{totramorigami} For each $g\ge 1$ and $t\neq 0,-1$, the genus-$g$ curve \[C_t:y^2=x(x+1)(x^{2g-1}+tj(x)^2),\] where \[j(x)=\sum_{i=0}^{g-1} \binom{2g-1}{2i}(x+1)^i,\] admits a degree $2g-1$ map to the elliptic curve \[E_t:y^2=x(x+1)(x+t),\] totally ramified above $(0,0)$ and unramified everywhere else. The map is given by $(x,y)\mapsto(f_1(x),f_2(x)y)$, where \[f_1(x)=\frac{x^{2g-1}}{j(x)^2}\] and \[f_2(x)=\frac{x^{g-1}\sum_{i=0}^{g-1}\binom{2g-1}{2i+1}(x+1)^i}{j(x)^3}.\] \end{thm}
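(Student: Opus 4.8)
The plan is to establish four things in turn: that the formula defines a morphism $C_t\to E_t$, that this morphism has degree $2g-1$, that it is totally ramified above $(0,0)$, and that it is unramified everywhere else; the genus being $g$ will then fall out of Riemann--Hurwitz as a consistency check.

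First I would verify that the image lands on $E_t$. Writing $k(x)=\sum_{i=0}^{g-1}\binom{2g-1}{2i+1}(x+1)^i$, so that $f_2=x^{g-1}k/j^3$, and substituting $y^2=x(x+1)(x^{2g-1}+tj^2)$ from $C_t$, the required relation $(f_2y)^2=f_1(f_1+1)(f_1+t)$ collapses, after cancelling the common factor $x^{2g-1}(x^{2g-1}+tj^2)/j^6$, to the single polynomial identity
\[
(x+1)k(x)^2=x^{2g-1}+j(x)^2.
\]
This is the heart of well-definedness, and I would prove it by a generating-function trick: setting $u=x+1$, the even- and odd-degree parts of $(1+\sqrt u)^{2g-1}$ are exactly $j$ and $\sqrt u\,k$, so $(1\pm\sqrt u)^{2g-1}=j\pm\sqrt u\,k$, and multiplying gives $(1-u)^{2g-1}=j^2-uk^2$. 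Since $2g-1$ is odd and $1-u=-x$, this reads $-x^{2g-1}=j^2-(x+1)k^2$, which is the identity. As a bonus, the same computation furnishes the factorizations $f_1+1=(x+1)k^2/j^2$ and $f_1+t=(x^{2g-1}+tj^2)/j^2$ that later drive the ramification analysis.

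For the degree, I would use the commuting square of hyperelliptic/elliptic projections: $\pi_E\circ f=f_1\circ\pi_C$, where $\pi_C,\pi_E$ are the degree-$2$ maps to the $x$-line and $f_1=x^{2g-1}/j^2$. Since $j(0)=\sum_i\binom{2g-1}{2i}=2^{2g-2}\neq0$, the numerator and denominator of $f_1$ are coprime, and as $\deg j=g-1$ the rational function $f_1$ has degree $\max(2g-1,2g-2)=2g-1$ as a self-map of $\PP^1$. Comparing degrees in $\pi_E\circ f=f_1\circ\pi_C$ gives $2\deg f=2(2g-1)$, hence $\deg f=2g-1$.

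The ramification analysis is where the real work and the main obstacle lie. Over the value $x=0$ the map $f_1$ is totally ramified (only $x=0$, with multiplicity $2g-1$), its unique lift is the Weierstrass point $(0,0)\in C_t$, and a local computation in the uniformizer $y$ at that point (where $x\sim c\,y^2$, so $f_2\sim c'x^{g-1}\sim c''y^{2g-2}$ and hence $y_E=f_2y\sim c''y^{2g-1}$) shows the asserted total ramification. The delicate part is proving that $f$ is \emph{unramified} everywhere else. For a point of $C_t$ lying over a non-Weierstrass point of $E_t$ one has $e_f=e_{f_1}$, and since every ramification point of $f_1$ lies over a value in $\{0,-1,\infty\}$, no ramification occurs over non-Weierstrass points; it remains to examine the fibers over $(-1,0)$, $(-t,0)$, and $\infty_E$. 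Here the factorizations above are essential: over $(-1,0)$ the extra preimages are the roots of $k$ (where $f_1$ is doubly ramified), over $(-t,0)$ they are the roots of $x^{2g-1}+tj^2$ — precisely the Weierstrass points of $C_t$ cut out by its own defining equation — and over $\infty_E$ they are the roots of $j$ together with $\infty_C$. In each case a local computation shows that the order-$2$ ramification of $f_1$ is exactly absorbed by the ramification of the double cover $\pi_E$ at the Weierstrass point, leaving $e_f=1$. Carrying out this bookkeeping — matching each fiber's point count to the degree $2g-1$ and confirming no residual ramification — is the most intricate step, and it rests throughout on the genericity of $t$ ensuring that $j$, $k$, and $x^{2g-1}+tj^2$ are separable with disjoint, nonzero roots. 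Finally, Riemann--Hurwitz reads $2g_C-2=(2g-1)\cdot0+\big((2g-1)-1\big)=2g-2$, confirming $g_C=g$.
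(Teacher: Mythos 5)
Your verification of well-definedness is actually more complete than the paper's: the paper merely asserts that $f_2^2\cdot x(x+1)(x^{2g-1}+tj^2)=f_1(f_1+1)(f_1+t)$ ``does happen to be the case,'' whereas your generating-function identity $(1\pm\sqrt u)^{2g-1}=j\pm\sqrt u\,k$, yielding $(x+1)k^2=x^{2g-1}+j^2$, is exactly the right proof. Where you diverge sharply from the paper is the ramification analysis. The paper's proof is a one-line pullback computation: $f^\ast\bigl(\tfrac{dx}{y}\bigr)=(2g-1)\tfrac{x^{g-1}\,dx}{y}$ (this rests on the companion identity $(2g-1)j-2xj'=(2g-1)k$, again provable from your generating function). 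Since $dx/y$ has neither zeros nor poles on $E_t$, the divisor of $f^\ast\omega$ \emph{is} the ramification divisor of $f$; and $x^{g-1}\,dx/y$ is a holomorphic differential on the hyperelliptic curve $C_t$ vanishing to order $2g-2$ at $(0,0)$, hence --- its degree already being $2g-2=\deg K_{C_t}$ --- vanishing nowhere else. That gives total ramification at $(0,0)$ and unramifiedness everywhere else in one stroke. Your route through the fibers of $f_1$ over $0,-1,-t,\infty$ and the relation $e_{\pi_E}(Q)\,e_f(P)=e_{f_1}(\pi_C(P))\,e_{\pi_C}(P)$ is a legitimate, more elementary alternative, and your fiber inventory (roots of $k$ over $(-1,0)$, roots of $x^{2g-1}+tj^2$ over $(-t,0)$, roots of $j$ plus $\infty_C$ over $\infty_E$) is correct; but you have only described the bookkeeping rather than carried it out.

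The one genuine gap is your appeal to ``the genericity of $t$'' for the separability and disjointness of the roots of $j$, $k$, and $x^{2g-1}+tj^2$. This cannot work as stated, for two reasons: the theorem claims every admissible $t$, not a generic one; and $j$ and $k$ do not involve $t$ at all, so no choice of $t$ bears on their separability. These facts are true but need proof, and without them your argument breaks --- a double root of $j$, for instance, would produce a point of $C_t$ over $\infty_E$ with $e_f=2$. The cleanest repair is a Riemann--Hurwitz count for $f_1:\PP^1\to\PP^1$ itself: the fibers over $0$, $-1$, $\infty$ are the zero divisors of $x^{2g-1}/j^2$ and $(x+1)k^2/j^2$ and the polar divisor, supported on at most $1$, $g$, and $g$ points respectively, so they contribute at least $(2g-2)+(g-1)+(g-1)=4g-4$ to the ramification of $f_1$, which is the total permitted for a degree-$(2g-1)$ map. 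Equality forces $j$ and $k$ to be separable and coprime (coprimality also follows directly from $j^2-(x+1)k^2=-x^{2g-1}$ together with $j(0)=2^{2g-2}\neq 0$), and forces $f_1$ to be unramified over every value outside $\{0,-1,\infty\}$; in particular $x^{2g-1}+tj^2$, whose roots off $j=0$ form the fiber $f_1^{-1}(-t)$, is separable whenever $-t\notin\{0,-1\}$. (This last condition also exposes a slip in the statement that neither you nor the paper's proof addresses: the correct exclusion is $t\neq 0,1$, since $E_1$ is nodal and $x^{2g-1}+j^2=(x+1)k^2$ makes $C_1$ singular, while $E_{-1}$ is perfectly smooth.)
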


\begin{proof} We first check that $(f_1(x),f_2(x)y)$ actually gives a map from $C_t$ to $E_t$. This amounts to checking that \[f_2(x)^2(x (x+1) (x^{2g-1}+tj(x)^2)) = f_1(x)(f_1(x)+1)(f_1(x)+t)\] is a formal identity. This does happen to be the case; hence $(f_1(x),f_2(x)y)$ does define a map from $C_t$ to $E_t$.

Now, we check that the ramification type is as claimed. To do this, we observe that if $f(x,y)=(f_1(x),f_2(x)y)$ is the map above, and $\omega=\frac{dx}{y}\in\Omega^1_{E_t}$ is an invariant differential on $E_t$, then \[f^\ast\omega=(2g-1)\frac{x^{g-1}\; dx}{y}.\] So, $f^\ast\omega$ vanishes to order $2g-2$ at $(0,0)$ and has no other zeros or poles. Hence, $f$ is totally ramified at $(0,0)$ and unramified everywhere else. \end{proof}

\begin{rem} It is worth noting that the map $(f_1(x),f_2(x)y)$ is independent of $t$ and hence defines a map $F:\PP^2_{\CC}\to\PP^2_{\CC}$. If we fix an elliptic curve $E_t$ in the target $\PP^2$, then $F^{-1}(E_t)$ is a union of several irreducible components, one of which is $C_t$. If we take $t=0$ or $t=-1$, then $E_t$ is a nodal cubic, and $C_t$ is a singular quintic of arithmetic genus 0. This will be relevant in the next section. \end{rem}

\section{Degeneration techniques} \label{degen}

The proof given in the previous section thoroughly fails to capture the motivation that went into the discovery of this result. In fact, the story of finding these examples is much more interesting than is the proof. Therefore, we now discuss how the reader could (and the author did) discover such an example. To do this, we carefully work with the lowest-degree example: that of a degree-3 origami from a genus-2 curve to an elliptic curve. Such an origami must necessarily be totally ramified.

In the remainder of this section, we perform some educated guesswork; it will not be clear whether our guesses will turn out to be successful until we present a proof in the style of that of Theorem \ref{totramorigami}.

We will construct a family of genus-2 curves mapping to a family of elliptic curves, parametrized (essentially) by their Legendre form. Hence, for any $j$-invariant other than 0 or 1728, we will actually construct six genus-2 curves mapping to an elliptic curve with this $j$-invariant. These six genus-2 curves come in three pairs of isomorphic curves; hence, we generically obtain three pairwise nonisomorphic covers in this way.

\begin{figure}  \begin{center} \includegraphics[height=1in]{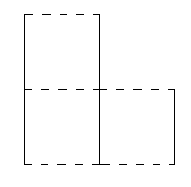} \end{center} \caption{Here, we shrink the dotted edges to a point. The resulting surface has geometric genus 0.} \label{Lorigamidegen} \end{figure} \begin{figure}  \begin{center} \includegraphics[height=1in]{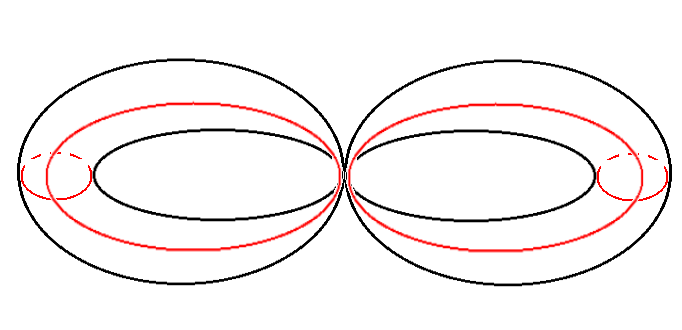} \end{center} \caption{A three-dimensional version of Figure \ref{Lorigamidegen}.} \label{threepinches} \end{figure}


To do this, we start by constructing a cover $C'$ of the nodal cubic \[E':y^2=x^3+x^2,\] which we expect to arise as a degeneration of covers of elliptic curves which limit to $E'$. One possibility is that the degenerate origami diagram will look like Figure \ref{Lorigamidegen}, with the dotted edges collapsed to a point. The curve represented by this origami has geometric genus 0, since it is a double torus with three pinched loops, as in Figure \ref{threepinches}. Furthermore, since the origamis are totally ramified, the family of covers must degenerate to a curve with only one preimage of the branch point in $E'$. Finally, a map $C'\to E'$ can be described as a map from the normalization of $C'$ to the normalization of $E'$.

The next thing to do is to construct an explicit equation for $C'$, as well as its normalization map. While in general this is a notoriously difficult problem, it is easy in this case. By the picture, we can see that $C'$ has one nodal point and has geometric genus 0; hence it has a Weierstra\ss\ equation of the form $y^2=(x-a)^4(x-b)$. We choose to take $a=0$ and $b=-1$ so that we obtain the curve \[C':y^2=x^5+x^4.\]

To compute the normalization of $E'$, we note that the map $E'\to\PP^1$ given by $(x,y)\mapsto x+1$ has a square root $y/x$ in $\CC(E')$. Letting $u=y/x$, we have $x=u^2-1$ and $y=u^3-u$, so $\CC(E')=\CC(u)$, and the normalization map is $\PP^1\to E'$, given by $u\mapsto(u^2-1,u^3-u)$. A similar computation shows that the normalization of $C'$ is $\PP^1\to C'$, given by $t\mapsto (t^2-1,t(t^2-1)^2)$. Note that, in the normalizations of both $C'$ and $E'$, the preimage of the nodal point is $\{\pm 1\}\subset\PP^1$.

The map on normalizations must have the same degree as the map $C'\to E'$, and it can only be branched at the preimages of the node of $E'$ and the branch point of the map $C'\to E'$. In this case, the only possibilities are $\{\pm 1\}$, so by the Riemann-Hurwitz formula it must be branched at both points. Fortunately, there are very few maps $\PP^1\to\PP^1$ with only two branch points: they are simply conjugates of $z\mapsto z^n$, where $n$ is the degree of the map. In this case, the map on normalizations is \[z\mapsto\frac{z^3+3z}{3z^2+1}.\]

Now, in order to compute the map $f:C'\to E'$, note that we have the following commutative diagram: \[\xymatrix{\PP^1\ar[r]\ar[d] & \PP^1\ar[d] \\ C'\ar[r]^f & E'}\] Furthermore, the vertical maps have near-inverses; the inverse of the vertical arrow on the left is given by $(x,y)\mapsto y/x^2$. Hence, $f$ is the composition of the other three arrows; putting this together, we have \[f(x,y)=\left(\frac{x^3}{(3x+4)^2},\frac{xy(x+4)}{(3x+4)^3}\right).\]

We now proceed to prolong $f$ to a map from a family of genus-2 curves to the Legendre family of elliptic curves by means of deformations.

In order to figure out the map from a family of nonsingular genus-2 curves to a family of elliptic curves, we deform the defining equations for the nodal quintic and for the map. We let the defining equation of the genus-2 curve be \[C_t:y^2=x^5+(1+at)x^4+btx^3+ctx^2+dtx,\] where $a,b,c,d\in\CC[\![t]\!]$. The defining equation of the elliptic curve will be \[E_t:y^2=x(x+1)(x+t).\] The map will be \[(x,y)\mapsto\left(\frac{x^3}{((3+et)x+(4+ft))^2},\frac{(x^2+(4+gt)x)y}{((3+et)x+(4+ft))^3}\right).\] A priori, $a,b,c,d,e,f,g$ are power series in $t$; for now, we are only interested in their constant terms. Expanding everything out and equating the $tx^i$ terms for various values of $i$ gives us a system of linear equations; we then find that \begin{align*} a&= 9 \\ b&= 33 \\ c&= 40 \\ d&=16 \\ e=f=g&= 0 \end{align*} is a solution. In fact, these values of $a,b,c,d,e,f,g$ are not merely the constant terms of power series; they are in fact the entire power series. Hence, if we let \[C_t:y^2=x^5+(1+9t)x^4+33tx^3+40tx^2+16tx\] and \[E_t:y^2=x(x+1)(x+t),\] then \[f(x,y)=\left(\frac{x^3}{(3x+4)^2},\frac{xy(x+4)}{(3x+4)^3}\right)\] is a map $f:C_t\to E_t$. Indeed, this map is only branched over $(0,0)$, with its preimage being $(0,0)$; we can check this directly, or we can verify that the pullback of the invariant differential $\omega=\frac{dx}{y}\in\Omega^1_{E_t}$ (which has no zeros or poles) is $3x\frac{dx}{y}\in\Omega^1_{C_t}$, which has a double zero at $(0,0)$ and no other zeros or poles.

The same method allows us to construct totally ramified origamis in every genus. For instance, in genus 3, the degenerate curve has the form $y^2=x^6(x+1)$, and the origami diagram is shown in Figure \ref{orig3}. In general, the origami diagrams we use for the constructions here are staircase-shaped.

\begin{center}\begin{figure}  \includegraphics[height=1.5in]{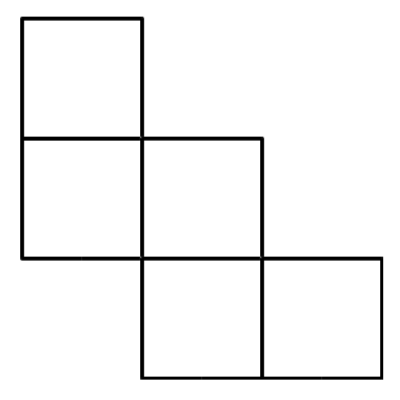} \caption{A genus-3 origami. Here, opposite sides are glued together.} \label{orig3} \end{figure}\end{center}

When we perform a degeneration procedure as we did in the genus-2 case, we find that \[C_t:y^2=x^7+(1+25t)x^6+225tx^5+760tx^4+1200tx^3+896tx^2+256tx\] and \[E_t:y^2=x(x+1)(x+t).\] Then \[f(x,y)=\left(\frac{x^5}{(5x^2+20x+16)^2},\frac{x^3(x^2+12x+16)y}{(5x^2+20x+16)^3}\right)\] is a totally ramified origami $f:C_t\to E_t$.

It is worth noticing that, in these cases, we need only change the equation of the genus-$g$ curve as $t$ varies; in particular, the map does not change. Hence, we have a map $f:\PP^2\to\PP^2$ so that the inverse images of elliptic curves in a certain family are all genus-$g$ curves, so that the map is an origami. The proof above explains this phenomenon.

It would be interesting to see this method generalize to cases where the base need not be an elliptic curve. In particular, we would like to know to what extent is it possible to construct branched covers of a curve $C$ in $\PP^2$ by constructing a suitable map $f:\PP^2\to\PP^2$, chosen so that its branch locus is consistent with the desired branching properties of the cover of $C$, and restricting to the map $f\mid_D:D\to C$, where $D$ is some irreducible component inside $f^{-1}(C)$ for which $f\mid_D:D\to C$ is flat. The author has used this method to construct several examples of branched and unbranched covers of higher-genus curves, but a detailed study of this method may be the topic of future work.

It is not so easy to use the techniques in this article to write down curves $C$ and maps $f$ for other ramification types. One challenge is that it is unclear how the degenerate pictures ought to look. Another challenge is that, even if it were clear, sometimes the normalizations of the covering degenerate curve may have positive genus, and we would then need to write down explicit equations for maps from a positive-genus curve to a genus-1 curve, and there is no known good procedure for doing so. The author has developed different techniques one can use to write down equations with other branching types and some of these techniques are presented in \cite{RS12}.

\section*{Acknowledgements}
The author would like to thank Akshay Venkatesh for suggesting this problem and for many helpful discussions and comments.


\providecommand{\WileyBibTextsc}{}
\let\textsc\WileyBibTextsc
\providecommand{\othercit}{}
\providecommand{\jr}[1]{#1}
\providecommand{\etal}{~et~al.}


\end{document}